\newtheorem{theo}{Theorem}
\newtheorem{prop}{Proposition}
\newtheorem{lem}{Lemma}
\def\dom{\mathrm{dom}\,}
\def\R{\mathbb{R}}
\title{Hadamard Inverse Function Theorem\\ Proved by Variational Analysis}
\author{Milen Ivanov and Nadia Zlateva}
\date{\em In memoriam to A. L.  Dontchev}
\begin{document}

\maketitle

\begin{abstract}
 We present a proof of Hadamard Inverse Function Theorem by the methods of Variational Analysis, adapting an idea of I. Ekeland and E. S\'er\'e~\cite{ES}.
\end{abstract}

\section{Introduction}
The classical example $(x,y)\to e^x(\cos y, \sin y)$ shows that -- except in dimension one -- the derivative may be everywhere invertible while the function itself is invertible only locally. Probably the historically first sufficient condition  for global invertibility is given by J. S. Hadamard, see \eqref{eq:2} in Theorem~\ref{thm:hadamard} below.

An excellent overview -- both from research and educational perspective -- of this topic is given in \cite{plastock}. Perhaps the easiest to understand -- because of its geometrical nature -- proof involves application of the Mountain Pass Theorem to the function $x\to\|f(x)-y\|$ to ensure the injectivity of $f$. However, Mountain Pass Theorem -- although ``obvious'' -- is hard to verify and may impose additional restrictions of technical nature.

Here we present a new proof based on a recent idea by I. Ekeland and E. S\'er\'e~\cite{ES}. This idea allows obtaining a continuous right inverse to $f$ on any compact, see Proposition~\ref{pro:main}. The necessary key Proposition~\ref{pro:main} is proved by methods of Variational Analysis in the flavour of the  monographs of A. Dontchev~\cite{Asen-book}, A. Dontchev and T. Rockafellar~\cite{doro}, and A. Ioffe~\cite{ioffe}.

\bigskip

We work in a Banach space $(X, \|\cdot\|)$ and denote its closed unit ball  by~$B_X$. Recall that the function
  $$
    f: X\to X
  $$
  is called Fr\'echet differentiable at $x\in X$ if there is a bounded linear operator  $f'(x):X\to X$ such that
  $$
    \lim_{\|h\|\to 0}\frac{f(x+h)-f(x)-f'(x)h}{\|h\|} = 0.
  $$
  The function $f$ is called {\em smooth}, denoted $f\in C^1$, if the function
  $$
    x \to f'(x)
  $$
  is norm-to-norm continuous.

We present a modern proof to the following classical
\begin{theo} \textsc{(Hadamard)}
\label{thm:hadamard}
  Let $f\in C^1$, $f'(x)$ be invertible for all $x$ and satisfying
  \begin{equation}
    \label{eq:2}
    \|[f'(x)]^{-1}\| \le M,\quad\forall x\in X,
  \end{equation}
  for some $M > 0$.

 Then  $f$ is $C^1$ invertible on $X$.

  In other words, there is $g\in C^1$ such that
  $$
     g(f(x)) =f(g(x))=x,\quad\forall x\in X.
  $$
\end{theo}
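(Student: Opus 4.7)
The approach is to combine the existence of a continuous right inverse on compacts (Proposition~\ref{pro:main}) with the classical \emph{local} inverse function theorem in Banach spaces. Under the hypotheses of Theorem~\ref{thm:hadamard}, the local IFT will make $f$ a $C^1$-diffeomorphism on some neighbourhood of every point, so $f$ is in particular locally injective; the uniform estimate $\|[f'(x)]^{-1}\|\le M$ additionally furnishes a global Lipschitz control on any right inverse of $f$ along a curve, which will prevent lifts from escaping to infinity.

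Surjectivity of $f$ will be immediate from Proposition~\ref{pro:main}: applied to the singleton $K=\{y\}$, it yields some $x$ with $f(x)=y$. To build a globally defined continuous right inverse $g:X\to X$, I plan to fix a base point $x_0\in X$, set $y_0:=f(x_0)$, and apply Proposition~\ref{pro:main} to the exhausting sequence of closed balls $K_n:=\bar B(y_0,n)$, producing continuous $g_n:K_n\to X$ with $f(g_n(y))=y$ for every $y\in K_n$. A standard path-lift uniqueness argument coming from local injectivity of $f$ will show that a continuous right inverse on $K_n$ is determined by its value at one point; the $g_n$ can then be arranged to satisfy $g_n(y_0)=x_0$ and, by the same uniqueness, they are mutually compatible and glue to a continuous $g:X\to X$ with $f(g(y))=y$ for every $y\in X$ and $g(y_0)=x_0$.

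Injectivity will then be purely topological. Set $A:=\{x\in X:g(f(x))=x\}$. Then $A\ni x_0$, so $A\neq\emptyset$; $A$ is closed by continuity of $g\circ f$; and $A$ is open because, if $x\in A$ and $f$ is injective on some ball $B(x,r)$, then for $x'$ near $x$ the point $g(f(x'))$ is near $g(f(x))=x$ and hence in $B(x,r)$, so since $f(g(f(x')))=f(x')$ local injectivity forces $g(f(x'))=x'$. Connectedness of $X$ then gives $A=X$, so $g$ is also a left inverse of $f$. Finally $g\in C^1$ will hold because on a neighbourhood of any $y$ it agrees with the $C^1$ local inverse supplied by the classical IFT, giving $g'(y)=[f'(g(y))]^{-1}$, which is continuous in $y$ by the $C^1$ regularity of $f$ and continuity of inversion on the invertible operators.

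The main technical hurdle I anticipate lies in the second step: Proposition~\ref{pro:main} is purely an existence statement on compacts with no a priori control on the value at a base point, so one has to combine it with local injectivity of $f$ and the $M$-bound in order to pin down $g_n(y_0)=x_0$ and to ensure the resulting $g_n$ glue consistently. Once this is arranged, injectivity and $C^1$-smoothness follow by soft general arguments, and the true content of the theorem is carried by the variational input of Proposition~\ref{pro:main}.
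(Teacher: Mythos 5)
Your overall architecture---surjectivity from Proposition~\ref{pro:main} on singletons, a connectedness argument for injectivity, and $C^1$-smoothness from the local Inverse Function Theorem---is in the right spirit and the last and first steps are fine. However, the central step contains a genuine gap that makes the argument fail in the generality in which the theorem is stated.

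You propose to apply Proposition~\ref{pro:main} to the sets $K_n:=\bar B(y_0,n)$ to build a globally defined continuous right inverse $g:X\to X$. But Proposition~\ref{pro:main} requires $K$ to be \emph{compact}, and the theorem is stated for an arbitrary Banach space $X$; in infinite dimensions closed balls are never compact (indeed, an infinite-dimensional Banach space is not even $\sigma$-compact, by Baire category, so no exhaustion of $X$ by compacta exists). Thus the sequence $g_n$ cannot be produced, and the global right inverse $g$ on which your injectivity argument rests (the connectedness of the set $A=\{x:g(f(x))=x\}$ in $X$) never gets off the ground. The path-lift uniqueness and the open--closed--nonempty reasoning you describe are both sound \emph{given} such a $g$, so the sole but fatal obstruction is that you have no way to obtain it.

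The paper avoids this obstruction entirely by never constructing a global right inverse. For injectivity it fixes $a,b$ with $f(a)=f(b)$, normalises to $b=0$, $f(0)=0$, and applies Proposition~\ref{pro:main} only to the single compact set $K:=f([0,a])$---the continuous image of a line segment, hence compact in any Banach space---obtaining $g:K\to X$ with $g(0)=0$. The connectedness argument is then run in the \emph{parameter} $t\in[0,1]$ on the set $I=\{t\in[0,1]: g(f(ta))=ta\}$, using the local Inverse Function Theorem exactly as you envisioned to show $I$ is open in $[0,1]$; closedness and $0\in I$ are immediate, whence $1\in I$, $g(f(a))=a$, and $a=0$. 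This is the same topological mechanism as yours, but transplanted onto a compact parameter space so that Proposition~\ref{pro:main} actually applies. If you restrict to finite-dimensional $X$ your version would go through, but to recover the theorem as stated you should replace the exhaustion-by-balls step with the compact-segment device.
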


The work is organised as follows. In the next Section~\ref{sec:prelim} we give the necessary preliminary known facts. In Section~\ref{sec:mainpro} we prove the key Proposition~\ref{pro:main} and in the final Section~\ref{sec:proof} we complete the proof of Hadamard Theorem~\ref{thm:hadamard}.

\section{Preliminaries}
\label{sec:prelim}

We start with recalling the classical (local) Inverse Function Theorem.
\begin{theo}
 \label{thm:inverse-classical}
  Let $f\in C^1$ and let $f'(x_0)$ be invertible. Then there are $\varepsilon,\delta >0$ such that for each $y$ such that
  $$
     \|y - f(x_0)\| <\varepsilon
  $$
  there is unique $x=:g(y)$ such that $\|x-x_0\| < \delta$ and
  $$
    f(x) = y.
  $$
  Moreover, $g\in C^1$ and
  \begin{equation}
   \label{eq:inv-der}
   g'(f(x_0)) = [f'(x_0)]^{-1}.
  \end{equation}
\end{theo}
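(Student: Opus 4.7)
The plan is to reduce the equation $f(x)=y$ to a fixed point problem and apply Banach's contraction principle; this is the standard route and yields both existence/uniqueness and $C^1$ regularity with almost no extra work. After translating the domain and codomain we may assume $x_0=0$ and $f(x_0)=0$. Writing $f(x)=Ax+r(x)$ with $A := f'(0)$ invertible, $r\in C^1$, and $r'(0)=0$, the equation $f(x)=y$ is equivalent to the fixed point equation
\[
  x = T_y(x) := A^{-1}\bigl(y - r(x)\bigr).
\]

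Using continuity of $f'$ at $0$, I would choose $\delta>0$ so that $\|r'(x)\|\le 1/(2\|A^{-1}\|)$ for $\|x\|\le\delta$. The mean value inequality then makes $T_y$ a $1/2$-contraction on $\delta B_X$, uniformly in $y$. Setting $\varepsilon := \delta/(2\|A^{-1}\|)$ ensures $\|T_y(0)\|\le \delta/2$, so $T_y$ maps $\delta B_X$ into itself. Banach's theorem then produces a unique $x=:g(y)\in\delta B_X$ solving $f(x)=y$ for every $\|y\|<\varepsilon$, and the identity $g(y)=A^{-1}(y-r(g(y)))$ combined with the contraction bound immediately yields the Lipschitz estimate $\|g(y_1)-g(y_2)\|\le 2\|A^{-1}\|\,\|y_1-y_2\|$.

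To establish $g\in C^1$ together with \eqref{eq:inv-der}, I would substitute $y=f(g(y))$ into the first-order Taylor expansion of $f$ around $0$; the Lipschitz bound on $g$ converts the $o(\|x\|)$ remainder of $f$ into an $o(\|y\|)$ remainder for $g$, which gives $g'(0)=A^{-1}$. Since the set of invertible operators in $\mathcal{L}(X,X)$ is open (Neumann series) and inversion is continuous there, shrinking $\varepsilon$ if necessary keeps $f'(g(y))$ invertible for every admissible $y$, and the same Taylor argument centered at $g(y)$ yields $g'(y)=[f'(g(y))]^{-1}$. Continuity of $g'$ then follows by composing the three continuous maps $y\mapsto g(y)$, $x\mapsto f'(x)$, and $A\mapsto A^{-1}$.

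The main technical step, such as it is, is this last transition: upgrading pointwise differentiability into full $C^1$ regularity requires a little care to track continuity of operator inversion and possibly shrink $\varepsilon$ to stay inside the open set of invertibility. Everything else is a routine application of the contraction principle.
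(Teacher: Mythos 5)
The paper states Theorem~\ref{thm:inverse-classical} as a known classical preliminary and does not prove it at all, so there is no in-paper argument to compare against. Your proposal is the standard Banach contraction mapping proof, and it is correct: the reduction to $x_0=0$, $f(x_0)=0$, the fixed-point reformulation $x=T_y(x)=A^{-1}(y-r(x))$, the choice of $\delta$ so that $T_y$ is a $1/2$-contraction and of $\varepsilon=\delta/(2\|A^{-1}\|)$ so that $T_y$ maps the closed ball $\delta B_X$ into itself, the resulting Lipschitz estimate $\|g(y_1)-g(y_2)\|\le 2\|A^{-1}\|\|y_1-y_2\|$, and the passage from Lipschitz continuity to differentiability of $g$ are all sound. (Two small points worth making explicit when writing this up: the fixed point produced in the closed ball actually lies in the open ball because $\|x\|\le\frac{1}{2}\|x\|+\|A^{-1}\|\,\|y\|<\delta$ when $\|y\|<\varepsilon$, so the strict inequality $\|x-x_0\|<\delta$ in the statement is met; and the Lipschitz estimate is uniform on the whole $\varepsilon$-ball of $y$'s, which is what lets you convert $o(\|g(y_1)-g(y)\|)$ into $o(\|y_1-y\|)$ in the Taylor argument at an arbitrary base point $g(y)$, not just at $0$.) The final continuity of $g'$ as the composition $y\mapsto g(y)\mapsto f'(g(y))\mapsto [f'(g(y))]^{-1}$, shrinking $\varepsilon$ to stay in the open set of invertible operators, matches what the paper later isolates as Lemma~\ref{lem:cont-1}.
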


The following statements are also well-known.
\begin{lem}
   \label{lem:uniform-on-compact}
   Let $f$ be $C^1$. Let $K\subset X$ be compact and let $r>0$. Then
   $$
     f(x+th) = f(x) + tf'(x)h + o(t)\mbox{ uniformly on }x\in K\mbox{ and }h\in rB_X.
   $$
  More precisely, there is $\alpha(t)\to 0$ as $t\to 0$ such that
  $$
    \sup\{\| f(x+th) - f(x) - tf'(x)h\|: x\in K,\ h\in rB_X\} \le \alpha(t)t.
  $$
\end{lem}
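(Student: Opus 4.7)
The plan is to use the fundamental theorem of calculus to write the difference as an integral, bound that integral in terms of an oscillation of $f'$, and then leverage compactness of $K$ to make the bound uniform. Concretely, since $f\in C^1$, for every $x,v\in X$ the identity
$$
  f(x+v) - f(x) = \int_0^1 f'(x+sv)\,v\, ds
$$
applied to $v = th$ yields, by passing to norms,
$$
  \|f(x+th) - f(x) - tf'(x)h\| \le t\|h\| \sup_{s\in[0,1]} \|f'(x+sth) - f'(x)\| \le tr\,\omega(rt),
$$
where
$$
  \omega(\rho) := \sup\{\|f'(x+y) - f'(x)\|:\ x\in K,\ \|y\|\le \rho\}.
$$
So the lemma reduces to showing $\omega(\rho)\to 0$ as $\rho\to 0^+$, after which one sets $\alpha(t):=r\,\omega(rt)$.

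The natural temptation for this reduction is to invoke uniform continuity of $f'$ on the set $K+\rho B_X$, but this fails in general Banach spaces since $B_X$ need not be relatively compact. The remedy is a finite-cover argument. Fix $\varepsilon>0$. By continuity of $f'$ at each point of $K$, for every $x\in K$ there is $\delta_x>0$ with $\|f'(z) - f'(x)\| < \varepsilon/2$ whenever $\|z - x\| < 2\delta_x$. Compactness of $K$ gives a finite subcover $K\subset\bigcup_{i=1}^N B(x_i,\delta_{x_i})$; put $\rho_0:=\min_i \delta_{x_i}$. For any $\rho\le \rho_0$, $x\in K$, and $\|y\|\le \rho$, one picks $x_i$ with $\|x - x_i\| < \delta_{x_i}$, so that $\|(x+y) - x_i\| < 2\delta_{x_i}$; then both $\|f'(x) - f'(x_i)\|$ and $\|f'(x+y) - f'(x_i)\|$ are bounded by $\varepsilon/2$, and the triangle inequality gives $\omega(\rho)\le\varepsilon$.

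The only real obstacle is precisely this last step: one must resist treating $K+\rho B_X$ as compact and instead argue through the finite cover coming from continuity of $f'$ on $X$ plus compactness of $K$ alone. Everything else — the integral identity, the two successive norm bounds, and the definition of $\alpha$ — is routine.
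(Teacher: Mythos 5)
Your proof is correct, and it is a complete, standard derivation. Note, however, that the paper does not actually prove this lemma: it is stated among the ``well-known'' preliminaries in Section~\ref{sec:prelim} with no proof given, so there is no in-paper argument to compare against.

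Your argument has the right shape and, importantly, dodges the one genuine pitfall: $K+\rho B_X$ is \emph{not} compact in an infinite-dimensional Banach space, so one cannot simply invoke uniform continuity of $f'$ on a compact tube around $K$. Your finite-cover step, which produces the modulus $\omega(\rho)\to 0$ from pointwise norm-continuity of $f'$ plus compactness of $K$ alone, is exactly the right fix, and the doubling of the radius ($2\delta_x$) makes the triangle inequality close cleanly. The integral identity $f(x+v)-f(x)=\int_0^1 f'(x+sv)v\,ds$ is legitimate here because $s\mapsto f'(x+sv)v$ is a continuous $X$-valued map on $[0,1]$, hence Riemann integrable, so the passage to norms and the bound $t\|h\|\sup_{s\in[0,1]}\|f'(x+sth)-f'(x)\|\le tr\,\omega(rt)$ are both sound; setting $\alpha(t):=r\,\omega(rt)$ then gives the conclusion.

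For the record, a mildly shorter route to $\omega(\rho)\to 0$ is by contradiction and sequential compactness: if the uniform estimate failed, there would be $\varepsilon_0>0$, $t_n\to 0^+$, $x_n\in K$, $h_n\in rB_X$ with $\|f(x_n+t_nh_n)-f(x_n)-t_nf'(x_n)h_n\|>\varepsilon_0 t_n$; extracting $x_n\to x_0\in K$ along a subsequence and noting $\|s t_n h_n\|\le r t_n\to 0$ uniformly in $s\in[0,1]$, the same integral bound contradicts norm-continuity of $f'$ at $x_0$. Both versions are equivalent in substance; yours is fine as written.
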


\begin{lem}
  \label{lem:cont-1}
  Let $X$ be a Banach space and let $A(x)$ be bounded linear operator for each $x\in X$. Let the function $x\to A(x)$ be norm-to-norm continuous at $x_0$. If $A(x_0)$ is invertible then
  $$
    x\to A^{-1}(x)
  $$
  is continuous at $x_0$.
\end{lem}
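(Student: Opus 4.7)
The plan is to use a Neumann series perturbation argument. Since the hypothesis does not a priori assert that $A(x)$ is invertible for $x$ near $x_0$, the same estimate must simultaneously deliver invertibility of $A(x)$ on a neighbourhood of $x_0$ and convergence $A^{-1}(x)\to A^{-1}(x_0)$ in operator norm.

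First I would factor
$$
  A(x) = A(x_0)\bigl[I + A(x_0)^{-1}(A(x)-A(x_0))\bigr].
$$
Set $T(x):=A(x_0)^{-1}(A(x)-A(x_0))$. By the norm-to-norm continuity of $x\mapsto A(x)$ at $x_0$ and sub-multiplicativity of the operator norm,
$$
  \|T(x)\| \le \|A(x_0)^{-1}\|\cdot\|A(x)-A(x_0)\| \longrightarrow 0\quad\text{as}\ x\to x_0.
$$
Hence there is a neighbourhood $U$ of $x_0$ such that $\|T(x)\|\le 1/2$ for $x\in U$.

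On $U$ the Neumann series $\sum_{k=0}^\infty (-T(x))^k$ converges absolutely in operator norm and represents the bounded inverse of $I+T(x)$. Consequently $A(x)$ is invertible on $U$ with
$$
  A^{-1}(x) = (I+T(x))^{-1} A(x_0)^{-1},
$$
so both sides of the claimed conclusion make sense near $x_0$.

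Finally I would estimate
$$
  A^{-1}(x)-A^{-1}(x_0) = \bigl[(I+T(x))^{-1} - I\bigr]A(x_0)^{-1},
$$
and use the standard Neumann bound $\|(I+T)^{-1}-I\|\le \|T\|/(1-\|T\|)$ valid for $\|T\|<1$ to deduce
$$
  \|A^{-1}(x)-A^{-1}(x_0)\| \le \frac{\|T(x)\|}{1-\|T(x)\|}\,\|A(x_0)^{-1}\| \longrightarrow 0.
$$
This yields continuity of $x\mapsto A^{-1}(x)$ at $x_0$. There is no real obstacle here; the only point that requires care is to observe that invertibility of $A(x)$ near $x_0$ is part of what must be proved, and both this and the continuity estimate drop out of the single Neumann-series computation.
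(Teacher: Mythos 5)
The paper states Lemma~\ref{lem:cont-1} without proof, as ``well-known,'' so there is no in-paper argument to compare your attempt against. Your Neumann-series proof is correct and is the standard one: writing $A(x)=A(x_0)\bigl(I+T(x)\bigr)$ with $T(x)=A(x_0)^{-1}\bigl(A(x)-A(x_0)\bigr)$, using norm continuity to get $\|T(x)\|\to 0$, invoking the Neumann series for $(I+T(x))^{-1}$ once $\|T(x)\|<1$, and applying the bound $\|(I+T)^{-1}-I\|\le \|T\|/(1-\|T\|)$ yields both invertibility of $A(x)$ in a neighbourhood of $x_0$ and the continuity of $x\mapsto A^{-1}(x)$ in one computation. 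You were also right to flag that invertibility of $A(x)$ for $x$ near $x_0$ is not part of the hypothesis and must itself be established; that is the only subtlety, and your argument handles it cleanly.
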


Next is a precursor to Ekeland Variational Principle, see \cite[Chapter 5, Section 1]{AE}. Of course, it easily follows from Ekeland Variational Principle itself, see e.g. \cite[Basic Lemma]{ioffe}. See also the comments concerning the ``Basic  Lemma'' on \cite[p. 93]{ioffe}. Here we present a proof based on what is called in these comments ``simple iteration''.
\begin{lem}
  \label{lem-ek-tem}
  Let $X$ be a Banach space and let $\mu:X \to \R^+\cup\{\infty\}$ be lower semicontinuous and such that for some $ r>0$
  $$
    \forall x:\ 0<\mu (x) < \infty\Rightarrow \exists y:\ \mu(y) < \mu(x) - r\|y-x\|.
  $$
  Then for each $x\in \dom \mu$ there is $y\in X$ such that
  $$
     \mu(y) = 0\mbox{ and }r\|y-x\| \le \mu(x).
  $$
\end{lem}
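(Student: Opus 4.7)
The plan is to construct a sequence $(x_n)_{n\ge 0}$ starting at $x_0 = x$ by greedy descent, show it is Cauchy by telescoping, and identify its limit as the desired point. Given $x_n$ with $\mu(x_n) > 0$, I consider the descent set
$$T_n = \{z \in X : \mu(z) \le \mu(x_n) - r\|z - x_n\|\},$$
set $\alpha_n = \inf_{T_n}\mu$, which lies in $[0,\mu(x_n))$ by the hypothesis, and pick
$$x_{n+1} \in T_n \quad\text{with}\quad \mu(x_{n+1}) \le \tfrac{1}{2}(\mu(x_n) + \alpha_n).$$
The ``go halfway to the infimum'' greediness is the crucial feature of the construction; if at some step $\mu(x_n) = 0$, we stop with $y := x_n$.

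The sequence $(\mu(x_n))$ is nonincreasing and nonnegative, hence converges to some $\ell \ge 0$. Telescoping the bound $r\|x_{k+1}-x_k\| \le \mu(x_k) - \mu(x_{k+1})$ gives $r\sum_k \|x_{k+1}-x_k\| \le \mu(x_0) - \ell$, so $(x_n)$ is Cauchy and converges to a limit $y \in X$ satisfying $r\|y - x_0\| \le \mu(x_0) - \ell \le \mu(x_0)$. Lower semicontinuity yields $\mu(y) \le \ell$, so the required bound $r\|y-x\| \le \mu(x)$ becomes automatic once we establish $\ell = 0$.

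The main obstacle is proving $\mu(y) = 0$, and I argue by contradiction. If $\mu(y) > 0$, the hypothesis supplies $z$ with $\mu(z) < \mu(y) - r\|z-y\|$; note $z \ne y$. Passing $m \to \infty$ in the iterated inequality $\mu(x_n) \ge \mu(x_{n+m}) + r\|x_{n+m} - x_n\|$ gives $\mu(x_n) \ge \ell + r\|y - x_n\|$, and combining this with $\mu(y) \le \ell$ and the reverse triangle inequality shows
$$\mu(x_n) - r\|z - x_n\| \ge \mu(y) - r\|z - y\| > \mu(z),$$
so $z \in T_n$ for every $n$. Therefore $\alpha_n \le \mu(z) < \ell - r\|z-y\|$ uniformly in $n$, and passing $n\to\infty$ in $\mu(x_{n+1}) \le \tfrac{1}{2}(\mu(x_n) + \alpha_n)$ produces $\ell \le \ell - \tfrac{r}{2}\|z-y\|$, forcing $z = y$ and contradicting $\mu(z) < \mu(y)$. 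Hence $\ell = 0$, $\mu(y) = 0$, and the lemma follows.
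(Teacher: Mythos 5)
Your proof is correct, and it shares the overall skeleton of the paper's argument — construct a descent sequence by greedy iteration, show it is Cauchy by telescoping the inequality $r\|x_{k+1}-x_k\| \le \mu(x_k)-\mu(x_{k+1})$, pass to the limit, and derive a contradiction using lower semicontinuity — but your greediness criterion is genuinely different from the paper's. The paper maximizes \emph{distance}: it sets $\nu_n = \sup\{\|x-x_n\| : \mu(x)<\mu(x_n)-r\|x-x_n\|\}$ and picks $x_{n+1}$ to move more than $\nu_n/2$, so that the summability of the step lengths forces $\nu_n\to 0$, and any admissible descent point from the limit $\bar x$ would be within $\nu_n$ of $x_n$ for large $n$, forcing it to coincide with $\bar x$. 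You instead minimize \emph{value}: you set $\alpha_n = \inf_{T_n}\mu$ and pick $x_{n+1}$ with $\mu(x_{n+1})\le\frac12(\mu(x_n)+\alpha_n)$, and then show that an admissible descent point $z$ from $y$ remains in every $T_n$, keeping $\alpha_n$ strictly below $\ell$ by $r\|z-y\|$, which is incompatible with the halfway rule driving $\mu(x_{n+1})$ to $\ell$. Both are instances of what Ioffe calls ``simple iteration''; the paper's choice keeps the bookkeeping purely geometric (tracking $\nu_n\to 0$), while yours trades that for a one-line estimate $\alpha_n \le \ell - r\|z-y\|$ and a quantitative gap in the halving recursion. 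The arithmetic is slightly longer in your version (you need the iterated inequality $\mu(x_n)\ge\ell+r\|y-x_n\|$ and the triangle estimate to show $z\in T_n$), but no new idea is required, and the final contradiction is arguably more self-contained. Minor remark: you should note explicitly that the sequence terminates if $\mu(x_n)=0$ at some finite stage (you do mention this) and that the bound $r\|y-x_0\|\le\mu(x_0)$ then follows from the same telescoping — both are handled correctly.
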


\begin{proof}
  Fix $x_0\in \dom \mu$ such that $\mu(x_0) > 0$.  Let $x_1,x_2,\ldots,x_n$ be already chosen in the following way.

Set
  \begin{equation}
   \label{eq:nu-def}
     \nu_n= \sup \{ \|x-x_n\|:\ \mu(x) < \mu(x_n) - r\|x-x_n\|\}.
  \end{equation}
  We are given that the set in the right hand side is nonempty, so $\nu_n > 0$. Also, since $\mu\ge 0$, we have that $\nu_n \le  \mu(x_n)/r <\infty$.

  Choose a $x_{n+1}$ such that
  \begin{equation}
   \label{eq:x-n+1-def}
     \mu(x_{n+1}) < \mu(x_n) - r\|x_{n+1}-x_n\|
     \mbox{ and }\|x_{n+1}-x_n\| > \nu_n/2.
  \end{equation}
  Note that
  $$
    \|x_{n+1} - x_0\| \le \sum_{i=0}^n \|x_{i+1}-x_i\|\le \sum_{i=0}^n(\mu(x_i)-\mu(x_{i+1}))/r\le\mu(x_0)/r.
  $$
If  if $\mu(x_{n+1})=0$, we are done. If not, we continue by induction.

  If we would end up with an infinite sequence $(x_n)_0^\infty$, then from the above inequality $\sum_{i=0}^\infty \|x_{i+1}-x_i\| \le\mu(x_0)/r$, so $x_n\to \bar x$ as $n\to\infty$ and $\|\bar x- x_0\|\le \mu(x_0)/r$.  From \eqref{eq:x-n+1-def} it follows that $\nu_n\to 0$.

  If $\mu(\bar x) > 0$ then we can find $\bar y$ such that
  \begin{equation}\label{vuh}
  \mu(\bar y) < \mu(\bar x) - r\|\bar y-\bar x\|.
   \end{equation}
   Since $\mu$ is lower semicontinuous, we will have for all $n$ large enough $\mu(\bar y) < \mu(x_n) - r\|\bar y-x_n\|$. Hence, see \eqref{eq:nu-def},  $\nu_n \ge \|\bar y-x_n\| $ for all $n$ large enough. Since $\nu_n\to 0$, we get that $\bar y =\bar x$ which contradicts \eqref{vuh}.

  So,  $\mu(\bar x) = 0$ and we are done.
\end{proof}

\section{Right inverse \`a la Ekeland \& S\'er\'e}
\label{sec:mainpro}
The following is what distinguishes our proof of Hadamard Theorem.
\begin{prop}
 \label{pro:main}
  Let $f\in C^1$,  $f'(x)$ be invertible for all $x$ and let $f$ satisfy~\eqref{eq:2}. Let $K\subset X$ be compact. Then $f$ has a continuous right inverse on $K$, that is, there is a continuous $g:K \to X$ such that
  $$
    f(g(x)) = x,\quad \forall x\in K.
  $$
  Moreover, if $f(0)=0\in K$ then there is a continuous right  inverse of $f$ on $K$ that satisfies
  $$
    g(0) = 0.
  $$
\end{prop}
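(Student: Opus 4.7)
The plan is to apply the Ekeland-type Basic Lemma~\ref{lem-ek-tem} to a natural ``defect'' functional on the Banach space $Y := C(K, X)$ of continuous maps from $K$ to $X$, endowed with the supremum norm $\|\cdot\|_\infty$. Namely, I would consider
\[
\phi(g) := \max_{y \in K} \|f(g(y)) - y\|, \qquad g \in Y,
\]
which is finite (since $K$ is compact) and continuous on $Y$ by a short argument using continuity of $f$ together with the compactness of $K$. Any $g^\ast \in Y$ with $\phi(g^\ast) = 0$ is exactly a continuous right inverse of $f$ on $K$, so the task reduces to producing such a zero of $\phi$.

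The central step is to verify the descent hypothesis of Lemma~\ref{lem-ek-tem} with the uniform constant $r := 1/(2M)$: for every $g \in Y$ with $\delta := \phi(g) > 0$ there must exist $\tilde g \in Y$ with $\phi(\tilde g) < \phi(g) - r\|\tilde g - g\|_\infty$. My candidate is a Newton-like correction. Set
\[
h(y) := -[f'(g(y))]^{-1}\bigl(f(g(y)) - y\bigr),
\]
which is continuous on $K$ by the chain rule and Lemma~\ref{lem:cont-1} and satisfies $\|h\|_\infty \le M\delta$ by~\eqref{eq:2}; then put $\tilde g := g + t h$ for small $t > 0$. Since the image $g(K) \subset X$ is compact and $h(K) \subset M\delta\,B_X$ is bounded, Lemma~\ref{lem:uniform-on-compact} gives a uniform-in-$y$ expansion
\[
f(\tilde g(y)) - y = (1-t)(f(g(y)) - y) + e(t, y), \qquad \sup_{y \in K}\|e(t,y)\| \le \alpha(t)t,\ \alpha(t) \to 0,
\]
because $f'(g(y))h(y) = -(f(g(y)) - y)$. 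Hence $\phi(\tilde g) \le (1-t)\delta + \alpha(t)t$, while $\|\tilde g - g\|_\infty = t\|h\|_\infty \le tM\delta$, so the required descent reduces to $\alpha(t) < \delta - r\|h\|_\infty$; but $r\|h\|_\infty \le rM\delta = \delta/2$, and choosing $t$ so small that $\alpha(t) < \delta/2$ closes the argument. Lemma~\ref{lem-ek-tem} applied to $(Y,\phi)$ from any starting point then produces the desired $g^\ast$.

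For the final assertion, suppose $f(0) = 0 \in K$. I would repeat the whole argument on the closed (hence complete) subspace $Y_0 := \{g \in Y : g(0) = 0\}$, starting from $g_0 \equiv 0$. The descent construction preserves $Y_0$: if $g(0) = 0$ then $h(0) = -[f'(0)]^{-1}(f(0) - 0) = 0$, so $\tilde g(0) = 0$ as well. Applying Lemma~\ref{lem-ek-tem} inside $Y_0$ yields $g^\ast$ with $f(g^\ast(y)) = y$ for all $y \in K$ and $g^\ast(0) = 0$.

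The principal obstacle I expect is the verification of the descent inequality with a \emph{uniform} rate $r$, independent of the current $g$: this is exactly where the global bound~\eqref{eq:2} enters (it controls $\|h\|_\infty$ linearly in $\delta$), and where Lemma~\ref{lem:uniform-on-compact} is indispensable to upgrade the pointwise Fréchet approximation to the uniform estimate on the compact set $g(K)$ needed to compare $\phi(\tilde g)$ to $\phi(g) - r\|\tilde g - g\|_\infty$. Once both ingredients are in place, the rest is a direct invocation of the Basic Lemma in $Y$ or $Y_0$.
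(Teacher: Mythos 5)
Your proposal is correct and follows essentially the same route as the paper: the same defect functional on $C(K,X)$, the same Newton-type correction $h=-[f'(g(\cdot))]^{-1}(f(g(\cdot))-\cdot)$ (denoted $w$ with opposite sign in the paper), the same use of Lemmas~\ref{lem:uniform-on-compact} and~\ref{lem:cont-1} to secure the uniform descent estimate, the same rate $r=1/(2M)$, and the same restriction to the subspace $\{g: g(0)=0\}$ for the final assertion. The only cosmetic difference is that the paper contents itself with lower semicontinuity of the defect functional (all Lemma~\ref{lem-ek-tem} requires) rather than full continuity.
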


\begin{proof}
Let $C(K,X)$ be the space of all continuous functions from $K$ to $X$. It is clear that when equipped with the norm
$$
   \|g\|_\infty := \max_{y\in K} \|g(y)\|
$$
it is a Banach space.

Consider the following measure
$$
  \mu: C(K,X) \to \R^+
$$
of how much a given function $g$ differs from a right inverse of $f$:
$$
  \mu(g) := \max_{y\in K} \| f(g(y)) - y\|.
$$
It is clear that $\mu$ is lower semicontinuous. (It is easy to check that it is continuous but we do not need this.)

The claim  is that there exists $g$ such that $\mu(g) = 0$.

In order to check the condition of Lemma~\ref{lem-ek-tem}, fix $\hat g\in C(K,X)$ such that
$$
  \mu(\hat g) > 0.
$$
Set $u:K\to X$ as
$$
  u(y) := y - f(\hat g(y)).
$$
By definition,
$$
   \mu (\hat g) = \| u \|_\infty.
$$
So, $u$ is not identically equal to zero, because $\mu(\hat g)>0$.

Put
$$
  w( y) :=[f'(\hat g(y))]^{-1} u(y), \quad \forall y\in K.
$$
Because $x\to f'(x)$ is continuous, from Lemma~\ref{lem:cont-1} it follows that
$$
  y\to [f'(\hat g(y))]^{-1}
$$
is norm-to-norm continuous, so $w\in C(K,X)$.

Therefore,  for $t > 0$
$$
  g_t := \hat g + tw \in C(K,X).
$$
Note for future reference that form \eqref{eq:2} it follows that $\|w\|_\infty \le M \|u\|_\infty$, that is
\begin{equation}
  \label{eq:norm-w}
  \|w\|_\infty \le M\mu(\hat g).
\end{equation}

Our next aim is to estimate $\mu(g_t)$. By definition
$$
   \mu(g_t) := \max_{y\in K} \| f(g_t(y)) - y\|.
$$
For $y\in K$ define $\varphi _y: \R^+\to \R^+$ by
$$
  \varphi _y(t) := \| f(g_t(y)) - y\|,
$$
hence
\begin{equation}
 \label{eq:mufi}
 \mu(g_t) := \max_{y\in K} \varphi _y(t).
\end{equation}
Because the set $ \hat g(K)$ is compact and the set $w(K)$ is bounded, from Lemma~\ref{lem:uniform-on-compact} it follows that
$$
  \max_{y\in K}\| f(\hat g(y) +tw(y)) -  f(\hat g(y)) - t f'(\hat g(y))w(y)\| = \alpha(t)t,
$$
where $\alpha(t)\to 0$ as $t\to 0$.
But
$$
   f'(\hat g(y))w(y) =  f'(\hat g(y))  [f'(\hat g(y))]^{-1} u(y) = u(y),
$$
so
$$
   \|f(g_t(y)) -  f(\hat g(y)) - tu(y)\|_\infty =\alpha(t)t.
$$
Therefore, for any $y\in K$
\begin{eqnarray*}
 \varphi _y(t) &=& \| f(g_t(y)) - y\| \\
   &\le&  \|f(\hat g(y)) + tu(y)-y\| +  \|f(g_t(y)) -  f(\hat g(y)) - tu(y)\|\\
   &\le& \|(t-1)u(y)\| + \alpha(t)t.
\end{eqnarray*}
Since $\varphi_y(0) = \|u(y)\|$ we have that for small $t$
$$
  \varphi _y(t) \le (1-t) \varphi _y (0) + \alpha(t)t.
$$
Taking a maximum over $y\in K$, see \eqref{eq:mufi}, we get
$$
   \mu(g_t) \le (1-t) \mu(g_0)+ \alpha(t)t,
$$
or, in other words,
\begin{equation}
  \label{eq:diff}
  \mu( \hat g + tw) \le\mu(\hat g) - t \mu(\hat g) + \alpha(t)t.
\end{equation}
Since $\mu(\hat g) > 0$, for some $\delta > 0$ we then have $|\alpha(t)|< \mu(\hat g)/2$ for $ t\in (0,\delta)$. So,
$$
  \mu( \hat g + tw) < \mu(\hat g) - (t/2)\mu(\hat g), \quad \forall t\in (0,\delta).
$$
From \eqref{eq:norm-w}, which is $ \mu(\hat g) \ge (1/M)\|w\|_\infty$, we get
$$
  \mu( \hat g + tw) < \mu(\hat g) - (1/2M)\|tw\|_\infty, \quad \forall t\in (0,\delta),
$$
and we can apply Lemma~\ref{lem-ek-tem} with $r=1/2M$, $x=\hat g$ and $y= \hat g +(\delta/2)w$, to conclude that $\mu$ vanishes somewhere.

If $f(0)=0\in K$ then we can modify the above by considering instead of $C(K,X)$ the Banach space of continuous $g:K\to X$ such that $g(0)=0$. It is clear that in this case $u(0)=w(0) = 0$ and everything else works in the same way.
\end{proof}

\section{Proof of Theorem~\ref{thm:hadamard}}
\label{sec:proof}
\begin{proof}
It is enough to show that $f$ is bijective.

Let $y\in X$ be arbitrary and set $K=\{y\}$. From Proposition~\ref{pro:main} it follows that there is $x=g(y)$ such that $f(x)=y$. So, $f$ is surjective, i.e. $f(X)=X$.

Let $a,b\in X$ be such that $f(a)=f(b)$. By considering instead of $f$ the function
$$
  x \to f(b-x) - f(b)
$$
we can assume without loss of generality that
$$
  b=0\mbox{ and } f(0)=0.
$$
Then
$$
  f(a) = 0.
$$
Set
$$K := f([0,a]).$$
Since $f$ is continuous, $K$ is compact. From Proposition~\ref{pro:main} there is a continuous
$$
  g:K\to X,\mbox{ such that }g(0) = 0\mbox{ and }f(g(y))=y,\quad\forall y\in K.
$$
Consider
$$
  I := \{t\in [0,1]:\ g(f(ta)) = ta\}.
$$
Obviously, $0\in I$, because $g(0) = 0$. Due to the continuity of $g$ and $f$ the set $I$ is closed and, therefore, compact. Let
$$
   \bar t := \max \{ t:t\in I\}.
$$
Assume that $\bar t < 1$. By the local Inverse Function Theorem, see Theorem~\ref{thm:inverse-classical}, there are $\delta,\varepsilon>0$ such that for each $y\in X$ such that
$$
  \|y-f(\bar t a)\| < \varepsilon
$$
there is unique $x\in X$ such that $\|x-\bar ta\| < \delta$ and
$$
  f(x) = y.
$$
From the continuity of $f$  there is $\mu > 0$ such that for all $t\in(\bar t, \bar t +\mu)\subset (0,1)$ we have $\|ta - \bar ta\| < \delta$, $\|f(ta) -f(\bar t a)\| < \varepsilon$. Moreover,  $\|g(f(ta)) - \bar t a\| =\| ta  - \bar t a\| < \delta$, since $g(f(\bar ta)) = \bar t a$.

Then, because of  $f(ta)\in K$ we have that $ f(g(f(ta))) = f(ta)$. From the uniqueness of the solution to $f(\cdot )= f( ta)$ in this neighbourhood we get $g(f(ta)) = ta$ for all $t\in(\bar t, \bar t +\mu)$ which contradicts the definition of $\bar t $.

So, $\bar t =1$ meaning that $g(f(a)) = a$. But $f(a) = 0$. Since $g(0)=0$, it follows that $a=0$.

We have proved that if $f(a)=f(b)$ then $a=b$, so $f$ is injective.
\end{proof}


\begin{thebibliography}{99}

\bibitem{AE}
J.-P. Aubin and I. Ekeland,  \emph{Applied nonlinear analysis}, John Wiley \& Sons, New York, 1984, ISBN: 0-486-45324-3

\bibitem{Asen-book}
Asen L. Dontchev, \emph{Lectures on Variational Analysis}, 
Book Series: Applied Mathematical Sciences, Springer, 2021,  ISBN: 978-3-030-79910-6

\bibitem{doro} A. L. Dontchev and R. T. Rockafellar, \emph{Implicit Functions and Solution Mappings:
A View from Variational Analysis}, Series in Operations Research and Financial Engineering, Springer, 2014, ISBN: 978-1-4939-1037-3


\bibitem{ES}
I. Ekeland and E. S\'er\'e, A local surjection theorem, 2017,
https://project.inria.fr/brenier60/files/2011/12/Brenier.pdf



\bibitem{ioffe}  A. Ioffe, \emph{Variational Analysis of Regular Mappings:
Theory and Applications}, Springer Monographs in Mathematics, 2017, SBN: 978-3-319-64277-2

\bibitem{plastock} R. Plastock, Homeomorogisms between Banach spaces, Trans. Amer. Math. Soc., 200, 1974, 169--183.

\end{thebibliography}
\end{document}